\documentclass{amsart}[12pt]
\usepackage{amsmath, amsthm, amscd, amsfonts,}
\usepackage{graphicx,float}

\usepackage{amssymb}
\usepackage{pb-diagram}
\usepackage{lamsarrow}
\usepackage{pb-lams}

%
%





\setlength{\textheight}{22cm} \setlength{\textwidth}{14cm}
\setlength{\oddsidemargin}{1cm} \setlength{\evensidemargin}{1cm}

\newtheorem{theorem}{Theorem}[section]



\newtheorem{question}[theorem]{Question}
\numberwithin{equation}{section}

\usepackage{pb-diagram}

\def\rmark{\mbox{$\rm\bf\rule{0.06em}{1.45ex}\kern-0.05em R$}}
\def\pmark{\mbox{$\rm\bf\rule{0.06em}{1.45ex}\kern-0.05em P$}}
\def\nmark{\mbox{$\rm\bf\rule{0.06em}{1.45ex}\kern-0.05em N$}}
\def\vdash{\mbox{$\rm\| \kern-0.13em -$}}

\def\rmark{\mbox{$\rm\bf\rule{0.06em}{1.45ex}\kern-0.05em R$}}
\def\pmark{\mbox{$\rm\bf\rule{0.06em}{1.45ex}\kern-0.05em P$}}
\def\nmark{\mbox{$\rm\bf\rule{0.06em}{1.45ex}\kern-0.05em N$}}
\def\vdash{\mbox{$\rm\| \kern-0.13em -$}}


\begin{document}

\title[On a question of Zadrozny]{On a question of Zadrozny}

\author[ M. Golshani.]{Mohammad Golshani}

\thanks{The author's research has been supported by a grant from IPM (No. 91030417).}
\maketitle

In this short note, I will discuss the following question of Zadrozny \cite{Zadrozny}.

\begin{question} Assume $0^\sharp$ does not exists. Let $M$ be a model of $ZFC$. Is there a model $N$ of $ZFC$ extending $M$
with $ON^N=ON^M$ such that $HOD^N=L?$
\end{question}

Our next result gives a negative answer to this question in a strong way.
\begin{theorem}
Any model $V$ of $ZFC$ has a class generic extension $V'\models$``$ZFC$'' such that if $W \supseteq V'$ is a model of $ZFC$ with
$ON^W=ON^{V'},$ then $HOD^W \neq V.$
\end{theorem}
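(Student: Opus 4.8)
The plan is to exploit the single point of leverage the hypothesis hands us: if $W\supseteq V'$ has $ON^W=ON^{V'}$, $W\models ZFC$, and $HOD^W=V$, then $V$ is a \emph{definable} class in $W$ (it is literally $HOD^W$), and it even carries a $W$-definable global well-ordering, namely the canonical well-ordering of $HOD^W$. In particular every ordinal-definable set of $W$ lands in $V$. So it suffices to build $V'$ containing a single set $A\notin V$ that is forced to be ordinal-definable in \emph{any} such $W$: then $A\in HOD^W=V$, contradicting $A\notin V$, and hence no such $W$ exists. Everything reduces to manufacturing, by class forcing over $V$, a set $A\notin V$ whose membership relation is recoverable in every outer model with the same ordinals \emph{using only the class $V$ as a parameter}.

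First I would fix the forcing. Let $V'=V[G]$ be obtained by a reverse Easton class iteration that adds a generic class and simultaneously codes it in the style of Jensen's coding, the coding being carried out relative to canonical, $V$-definable almost disjoint families at each cardinal. The design goal is that ``$\xi\in A$'' is equivalent to an \emph{absolute} statement --- one built only from the $L[\,\cdot\,]$-hierarchy, the predicate ``finite'', ordinal parameters, and the class $V$ --- and that this equivalence is witnessed by objects lying in $V'$. Since the $L[x]$-hierarchy and finiteness are absolute between any two models with the same ordinals, and since the coding is laid down redundantly cofinally in $ON$, the recipe for reading off $A$ does not depend on the cardinal structure or on which sets $W$ happens to contain beyond $V'$. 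Standard tameness/pretameness of the iteration gives $V'\models ZFC$, and genericity gives $A\notin V$.

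The heart of the argument, and the step I expect to be hardest, is the \emph{rigidity} of this coding in outer models. Running the decoding inside $W$ with $V=HOD^W$ as the (definable, well-ordered) parameter class produces \emph{some} set; the danger is that the extra sets of $W$ supply spurious witnesses, so that the $\Sigma_1$-over-$V$ recipe returns a set strictly larger than, or simply different from, $A$. To rule this out I would have the coding conditions enforce enough strategic closure and distributivity that the generic is the \emph{unique} $V$-coherent class threaded through the coding apparatus, so that the ``$\xi\in A$'' and ``$\xi\notin A$'' clauses remain mutually exclusive after passing to $W$; this is exactly the combinatorial core of the Beller--Jensen--Welch coding machinery, here relativized so that the class into which one decodes is $V$ itself. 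Granting this rigidity, the decoding evaluated in $W$ returns precisely $A$, so $A$ is ordinal-definable in $W$ and therefore $A\in HOD^W=V$, the desired contradiction. The remaining verifications --- preservation of $ZFC$, absoluteness of the decoding, and $A\notin V$ --- are routine by comparison.
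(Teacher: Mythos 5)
Your overall skeleton is sound as far as it goes: if $HOD^W=V$ then $V$ is a definable class of $W$ (since $HOD$ is uniformly definable without parameters), so any set of ordinals defined in $W$ from ordinal parameters and the predicate $V$ lies in $HOD^W=V$; hence a single set $A\in V'\setminus V$ that is ordinal-definable in every relevant $W$ would finish the proof. The fatal problem is the lemma you yourself flag as the heart of the argument: the rigidity you need is not merely hard, it is false, so no design of the coding can deliver it. Whatever class forcing $\PP$ you use, your construction has to show that every $\PP$-generic extension $V[G]$ carries a set $A_G\notin V$ and one fixed formula $\varphi$ such that $A_G=\{\xi: W\models\varphi(\xi,V)\}$ for \emph{every} outer $W\models ZFC$ with the same ordinals. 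Now take two mutually generic runs $G_0,G_1$ of $\PP$ (over a countable transitive ground this configuration exists, and $W^*=V[G_0\times G_1]$ is an outer model of both $V[G_0]$ and $V[G_1]$ with the same ordinals). Evaluating $\varphi(\cdot,V)$ in $W^*$ produces one set, which would have to equal both $A_{G_0}$ and $A_{G_1}$; but mutual genericity gives $V[G_0]\cap V[G_1]=V$, so $A_{G_0}\neq A_{G_1}$ because neither lies in $V$ --- a contradiction. This is why the Beller--Jensen--Welch rigidity cannot be ``relativized'' to do what you want: its uniqueness statements concern the coding structure inside the model (or its inner models), whereas no generic object can be ordinal-definable in all outer models with the same ordinals, since outer models can always contain indistinguishable competing runs of the very same poset; strategic closure and distributivity of the conditions cannot exclude a competitor generated by that same poset. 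Nor can you retreat to ``only those $W$ with $HOD^W=V$'': your coding apparatus never interacts with that hypothesis before the final step, so nothing in your argument rules out spurious witnesses in exactly those $W$.

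The paper's proof uses the hypothesis $HOD^W=V$ at the beginning rather than at the end, via Vopenka's theorem: every set of ordinals of $W$ is \emph{set}-generic over $HOD^W$. This replaces your unattainable demand (``the generic $A$ is definable in $W$'') by one that genericity can meet: arrange that $V'$ is not contained in any set-generic extension of $V$. Concretely, first force with a reverse Easton iteration adding a Cohen subset of every successor cardinal (so that no extension of $V$ by a set-sized forcing can cover the resulting $V_1$ --- a fresh-set/approximation argument), then apply Jensen coding to get $V'=V_2\models$ ``$V=L[R]$'' for a single real $R$. If some $W\supseteq V'$ with the same ordinals had $HOD^W=V$, then $R\in W$ would be set-generic over $V$ by Vopenka, whence $V_1\subseteq L[R]\subseteq V[R]$ would sit inside a set-generic extension of $V$, which is impossible. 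So the idea missing from your write-up is precisely Vopenka's theorem; once it is in hand, no outer-model definability of any generic object is needed.
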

\begin{proof}
Force over $V$ by the reverse Easton iteration to add a new Cohen subset to each successor cardinal. Call the resulting extension $V_1$.
By Jensen's coding theorem, $V_1$ has a class generic extension $V_2$ such that for some real $R \in V_2,$
we have $V_2\models$``$V=L[R]$''. We show that $V'=V_2$
is as required.  Thus suppose $W \supseteq V'$ is a model of $ZFC$, and suppose on the contrary that $HOD^W=V.$ Then as $R\in W,$
by a result of Vopenka, $R$ is set generic over $HOD^W=V,$ and this is a contradiction.
\end{proof}

School of Mathematics, Institute for Research in Fundamental Sciences (IPM), P.O. Box:
19395-5746, Tehran-Iran.

E-mail address: golshani.m@gmail.com

\end{document}